\newtheorem{thm}{Theorem}
\newtheorem{lem}[thm]{Lemma}
\DeclareMathOperator{\supp}{supp}
\newcommand{\N}{\mathbb{N}}
\newcommand{\Z}{\mathbb{Z}}
\newcommand{\R}{\mathbb{R}}
\newcommand{\M}{\mathcal{M}}
\newcommand{\1}{\mathbf{1}}
\begin{document}

\makeatletter 
\baselineskip=15pt 

\title{Approximation by Finite Supported Functions\thanks{This work was supported by
National Natural Science Foundation of China (Grant Nos.11471043 and 11401451) and by the Beijing National Natural
Science Foundation (Grant No. 1172004).}}

\author{Bo Ling$^a$,\, Yongping Liu$^b$}

\date{ }
\maketitle \vspace{-0.4 cm}
\begin{flushleft}
\hspace{1cm}{\footnotesize $^a$School of Mathematics and Statistics, Xidian University, Xi'an, China\vspace{1.5mm}

\hspace{1cm}$^b$School of Mathematical Sciences, Beijing Normal
University, Beijing, China}
\end{flushleft}



\maketitle

\begin{abstract}
We consider  approximation by functions with finite support  and characterize its approximation spaces in terms of interpolation spaces and Lorentz spaces.
\end{abstract}

{\bf{Keywords:}} finite supported function, approximation space.

{\bf{Subjclass:}} 41A25, 41A29, 42A10

\section{Introduction}

\quad Let $\M$ be the collection of all real-valued Lebesgue measurable functions which are finte a.e. on the real line $\R.$ For a nonzero function $f\in \M,$ the set $\{x\in \R: f(x)\neq 0 \}$ is called the support of $f$ and denoted by $\supp f.$ In other words, $\supp f=\{x\in \R: f(x)\neq 0 \}.$
Let the subset $\Sigma_\sigma,$  $\sigma>0$, consists of all $f\in \M$ with Lebesgue measure $\mu( \supp f )\leq \sigma.$ Notice that the set $\Sigma_\sigma$ is not linear, because a sum of two functions in $\Sigma_\sigma$ will in general lie in $\Sigma_{2\sigma}$.

In this article we shall consider  approximation   by  finite-supported functions  from $\Sigma_\sigma$ in $L_p(\R)$ space for $0<p< \infty.$
Given a function $f$, we define the approximation error by
\begin{equation}\label{error} E_\sigma(f)_p:=\inf_{g\in \Sigma_\sigma} \|f-g\|_{L_p}.\end{equation}
Note that it is not necessary to assume that $f\in L_p$ in the definition (\ref{error}).

We are interested  in describing the functions $f$ for which $E_\sigma(f)_p$
 has a prescribed asymptotic behavior as $\sigma$ increases to  $\infty$. Therefore we define the  approximation spaces, which are a collection of functions with common upper bounds for the errors of approximation. They have been studied in various contexts, for details see Chapter 7 in \cite{devore1993constructive} and references there.
For each $\alpha>0$ and $0<q\leq \infty,$ we define the approximation space $A^\alpha_{p,q}$ as the set of all $f\in \M$ such that $\|f\|_{ A^\alpha_{p,q}}$ is finite, where
\begin{equation}\label{app-space} \|f\|_{ A^\alpha_{p,q}} := \begin{cases}
\left( \displaystyle\int_0^\infty  \left[\sigma^\alpha E_\sigma(f)_p\right]^q \frac{d\sigma}{\sigma}  \right)^{\frac{1}{q}}, &0<q<\infty,\\
\sup_{\sigma>0} \sigma^\alpha E_\sigma(f)_p, &  q=\infty.
\end{cases} \end{equation}
 It can be proved that $\|\cdot\|_{A^\alpha_{p,q}}$ is a quasinorm for the space $A^\alpha_{p,q}$ and is homogeneous.  If $\|f\|_{A^\alpha_{p,q}}=0,$ then $E_\sigma(f)_p=0$ for all $\sigma>0,$ which implies that the measure of $\supp(f)$ is $0$ and hence $f=0, a.e. x\in \mathbb{R}.$ From the facts that $E_{2\sigma} (f+g)_p \leq E_{\sigma} (f)_p +E_{\sigma} (g)_p, f,g\in A^\alpha_{p,q},$  and $E_{\sigma} (\lambda f)_p=\lambda E_{\sigma} (f)_p,$ for any $\lambda\ge 0,$    we may derive that $$\|f+g\|_{A^\alpha_{p,q}}\leq 2^\alpha\left( \|f\|_{A^\alpha_{p,q}}+\|g\|_{A^\alpha_{p,q}} \right).$$
It is easy to see that the spaces $A^\alpha_{p,q}$ is decreasing  as $q$ decreasing for fixed $\alpha.$ But unlike the most cases  $A^\alpha_{p,q}$ is not decreasing  as $\alpha$ increasing here.

In this paper, we are mainly concerned with characterization of  the approximation spaces $A^\alpha_{p,q}.$ It will be found that $A^\alpha_{p,s}$  is equivalent to a Lorentz space. The analogous results in the discrete cases  were found by Devore in \cite{devore1998nonlinear}, where the n-term approximation of a $l_2$ sequence   was used to illustrate the nonlinear approximation in a Hilbert spaces.

An outline of this paper is as follows. In section 2, we recall some necessary results about non-increasing  rearrangements and Lorentz spaces which are used in characterizing approximation spaces.
In section 3, we discuss existence of best approximation elements.
In section 4, we characterize the approximation space when $s=\infty,$ that is, all functions with a common approximation order.
In Section 5, we introduce the K-functionals and discuss its relations to best approximation.
In Section 6, we characterize the approximation spaces in the general cases.

\section{Decreasing  rearrangement and  Lorentz spaces}
For a function $f\in \M,$ we define the distribution function $\mu_f(\lambda):=\mu\{x\in \R: |f(x)|>\lambda \}$ for $\lambda\geq 0.$  The function $\mu_f(\lambda)$ is nonnegative, monotone decreasing and right-continuous. A function $f\in \M$ is said to vanish at infinity if $\mu_f(\lambda)$ is finite for all $\lambda>0.$ We denote  $\M_0$ the space consisting of all functions which \emph{vanish at infinity}.
For each $f$ we define its decreasing  rearrangement $f^\ast$ by $f^\ast(t)=\inf\{\lambda: \mu_f(\lambda)\leq t\}$ for  $t\geq 0.$

It is worth mentioning the following results.
For $f\in \M$ we have $\lim_{\lambda\to \infty}\mu_f(\lambda)=0$ and $f^\ast(t)$ is finite for $t>0$ from a.e. finitness of $f.$ If, in addition, $f$   vanishes at infinity, then $\mu_f(\lambda)$ is finite for $\lambda>0$ and  $f^\ast(t)$ vanishes at infinity.

Let $0<p<\infty$ and $0<q\leq \infty.$ For a measurable function $f$ on the real line define
\begin{equation}\label{Lorentz}  \|f\|_{L_{p,q}}:=\begin{cases}  \left(\displaystyle \int_0^\infty  (t^\frac{1}{p} f^\ast (t) )^q \frac{dt}{t} \right)^{\frac{1}{q}} , & 0<q<\infty,\\
  \sup_{t>0} t^{\frac{1}{p}} f^\ast(t), & q=\infty.  \end{cases}  \end{equation}
The set of all $f$ with $\|f\|_{L_{p,q}}<\infty$ is denoted by $L_{p,q}$ and is called the Lorentz space with indices $p$ and $q.$ It is known that $L_{p,p}=L_p$ and $L_{p,\infty}$ is weak $L_p.$ For fixed $p,$ the Lorentz spaces $L_{p,q}$ increase as the exponent $q$ increases.

By the following Lemma, we represent the  approximation error $E_\sigma(f)_p$ by its decreasing rearrangement. Its direct result is the existence of best approximation in the next section.
\begin{lem}\label{lem:basic}
Suppose that $f$ belongs to $\M_0$ and let $\sigma>0$ and $0<p\leq \infty.$ Then there is a measurable set $A_\sigma,$ with $\mu(A_\sigma)=\sigma,$  such that
\begin{equation}\label{eq:equimeas} \int_{A_\sigma} |f|^p  \,d\mu =\int_0^\sigma |f^\ast(t)|^p \,dt \mbox{\quad and \quad }  \int_{A_\sigma^C} |f|^p  \,d\mu =\int_\sigma^\infty |f^\ast(t)|^p \,dt . \end{equation}
Moreover, for every set $B_\sigma$ with $\mu(B_\sigma)=\sigma,$ it holds that
\[ \int_{B_\sigma} |f|^p \,d\mu \leq   \int_{A_\sigma} |f|^p \,d\mu \mbox{ and }
\int_{B^c_\sigma} |f|^p \,d\mu \geq   \int_{A^c_\sigma} |f|^p \,d\mu,     \]
 the sets $A_\sigma$ can be constructed to increase with $\sigma,$ i.e.,
\[ A_{\sigma_1} \subset A_{\sigma_2} \mbox{ \quad for }0<\sigma_1< \sigma_2<\infty.  \]
\end{lem}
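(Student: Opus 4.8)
\emph{Proof idea.} The set $A_\sigma$ will be constructed once, independently of $p$, as essentially a super-level set of $|f|$; the two equalities then come from the equimeasurability of $f$ with $f^\ast$, and the extremal and nesting statements from the explicit form of $A_\sigma$. Put $\lambda^\ast:=f^\ast(\sigma)$. Since $f$ vanishes at infinity, $\mu_f(\lambda)<\infty$ for every $\lambda>0$; combining the right-continuity of $\mu_f$ with $f^\ast(\sigma)=\inf\{\lambda:\mu_f(\lambda)\le\sigma\}$ yields the sandwich
\[ \mu\{|f|>\lambda^\ast\}\ \le\ \sigma\ \le\ \mu\{|f|\ge\lambda^\ast\} . \]
If $\lambda^\ast>0$ the level set $\{|f|=\lambda^\ast\}$ has finite measure, so by non-atomicity of Lebesgue measure I can pick a measurable $C\subseteq\{|f|=\lambda^\ast\}$ with $\mu(C)=\sigma-\mu\{|f|>\lambda^\ast\}$ and set $A_\sigma:=\{|f|>\lambda^\ast\}\cup C$; then $\mu(A_\sigma)=\sigma$ and $\{|f|>\lambda^\ast\}\subseteq A_\sigma\subseteq\{|f|\ge\lambda^\ast\}$. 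The remaining case $\lambda^\ast=0$ is exactly $\sigma\ge\mu(\supp f)$, and there I take $A_\sigma:=\supp f\cup D$ with $D\subseteq\R\setminus\supp f$ measurable of measure $\sigma-\mu(\supp f)$ (possible since $\mu(\R)=\infty$).

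For the identities in \eqref{eq:equimeas}, I would use that the decreasing rearrangement of $|f|\,\1_{\{|f|>\lambda^\ast\}}$ equals $f^\ast\,\1_{[0,\,\mu\{|f|>\lambda^\ast\})}$, that $|f|\equiv\lambda^\ast$ on $C$, and that $f^\ast\equiv\lambda^\ast$ on $[\mu\{|f|>\lambda^\ast\},\sigma]$, whence
\[ \int_{A_\sigma}|f|^p\,d\mu=\int_0^{\mu\{|f|>\lambda^\ast\}}\!|f^\ast(t)|^p\,dt+(\lambda^\ast)^p\bigl(\sigma-\mu\{|f|>\lambda^\ast\}\bigr)=\int_0^\sigma|f^\ast(t)|^p\,dt . \]
For the complementary identity, when $\int_\R|f|^p<\infty$ it follows by subtracting from $\int_\R|f|^p=\int_0^\infty|f^\ast|^p$, and in general one checks directly that $f\,\1_{A_\sigma^C}$ has the same distribution function as $f^\ast\,\1_{(\sigma,\infty)}$. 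When $p=\infty$ both identities are read with $\int|\cdot|^p$ replaced by $\esssup|\cdot|$ and are obtained the same way.

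For the extremal properties, let $B_\sigma$ be measurable with $\mu(B_\sigma)=\sigma$. From $\{|f|>\lambda^\ast\}\subseteq A_\sigma\subseteq\{|f|\ge\lambda^\ast\}$ we get $|f|\ge\lambda^\ast$ on $A_\sigma$ and $|f|\le\lambda^\ast$ on $A_\sigma^c$, and $\mu(A_\sigma)=\mu(B_\sigma)=\sigma<\infty$ gives $\mu(A_\sigma\setminus B_\sigma)=\mu(B_\sigma\setminus A_\sigma)$. Hence
\[ \int_{A_\sigma\setminus B_\sigma}|f|^p\,d\mu\ \ge\ (\lambda^\ast)^p\,\mu(A_\sigma\setminus B_\sigma)\ =\ (\lambda^\ast)^p\,\mu(B_\sigma\setminus A_\sigma)\ \ge\ \int_{B_\sigma\setminus A_\sigma}|f|^p\,d\mu, \]
and adding $\int_{A_\sigma\cap B_\sigma}|f|^p$ to both sides gives $\int_{B_\sigma}|f|^p\le\int_{A_\sigma}|f|^p$, while adding $\int_{A_\sigma^c\cap B_\sigma^c}|f|^p$ instead gives $\int_{B_\sigma^c}|f|^p\ge\int_{A_\sigma^c}|f|^p$; the additive form sidesteps any $\infty-\infty$ issue when $\int_\R|f|^p=\infty$.

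Finally, the nesting. If $\sigma_1<\sigma_2$ and $f^\ast(\sigma_1)>f^\ast(\sigma_2)$, then $A_{\sigma_1}\subseteq\{|f|\ge f^\ast(\sigma_1)\}\subseteq\{|f|>f^\ast(\sigma_2)\}\subseteq A_{\sigma_2}$ with no further choice needed, so the only delicate point is when $f^\ast$ is constant, equal to some $\lambda$, on an interval of parameters. This happens only for the at most countably many $\lambda>0$ with $\mu\{|f|=\lambda\}>0$ (those level sets being pairwise disjoint in the $\sigma$-finite line), and for each such $\lambda$ I would fix beforehand, again by non-atomicity, an increasing family $\{C_\lambda(r):0\le r\le\mu\{|f|=\lambda\}\}$ of subsets of $\{|f|=\lambda\}$ with $\mu(C_\lambda(r))=r$, always drawing the slice $C$ (and, similarly, the padding sets $D$) from these fixed families. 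This simultaneous, $p$-independent bookkeeping for the slices of the "plateau" level sets is the only step requiring genuine care; everything else is the layer-cake formula together with equimeasurability of $f$ and $f^\ast$.
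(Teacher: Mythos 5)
Your construction is essentially the paper's own: $A_\sigma$ is the super-level set $\{|f|>f^\ast(\sigma)\}$ padded with a measurable slice of the level set $\{|f|=f^\ast(\sigma)\}$ (or of the complement of $\supp f$ when $f^\ast(\sigma)=0$), the identities come from equimeasurability with $f^\ast\1_{[0,\sigma]}$, and the extremal property from $|f|\ge f^\ast(\sigma)$ on $A_\sigma$ and $|f|\le f^\ast(\sigma)$ off it. The proof is correct, and your explicit bookkeeping with the fixed increasing families $C_\lambda(r)$ actually treats the nestedness across plateaus of $f^\ast$ more carefully than the paper, which only remarks on monotonicity in the case where $\sigma$ lies in the range of $\mu_f$.
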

Note that  if one side of each equation in (\ref{eq:equimeas}) is infinity so is the other side.
\begin{proof}
First suppose  that $\sigma$ lies in the range of the distribution function $\mu_f$ of $f.$ That is, there exists $\alpha$ for which $\mu_f(\alpha)=\sigma.$ Form the monotone decreasing of $\mu_f$ and the definition of $f^\ast,$ it follows that
\[ f^\ast(t)=\inf\{\lambda: \mu_f(\lambda) = t  \}  \]
and then the right-continuity of $\mu_f$ gives $\mu_f(f^\ast(\sigma))=\sigma.$ It means that the set $A_\sigma:=\{x: |f(x)|>f^\ast(\sigma) \}$ has measure $\mu(A_\sigma)=\sigma$ and the distribution functions of $f \1_{A_\sigma}$ and $f \1_{A_\sigma^c}$ are
\[\mu_{f \1_{A_\sigma}}(\lambda)=\begin{cases} \mu_f(\lambda), &    \lambda>f^\ast(\sigma),
\\  \sigma,&  0\leq \lambda \leq f^\ast(\sigma),
   \end{cases} \mbox{\quad and \quad } \mu_{f \1_{A_\sigma^c}}(\lambda)=\begin{cases} 0, &    \lambda>f^\ast(\sigma),
   \\  \mu_f(\lambda)-\sigma,&  0\leq \lambda \leq f^\ast(\sigma), \end{cases} \]
respectively.

On the other hand, the distribution functions of $f^\ast \1_{[0,\sigma]}$ and $f^\ast \1_{[\sigma,\infty]}$ are
\[ \mu_{f^\ast \1_{[0,\sigma]}}=\begin{cases} \mu_{f^\ast}(\lambda), &    \lambda >f^\ast(\sigma),\\
 \sigma,&  0\leq \lambda \leq f^\ast(\sigma),    \end{cases} \mbox{\quad and \quad }
 \mu_{f^\ast \1_{[\sigma,\infty]}}=\begin{cases} 0, &    \lambda >f^\ast(\sigma),\\
 \mu_{f^\ast}(\lambda)- \sigma,&  0\leq \lambda \leq f^\ast(\sigma).    \end{cases}
  \]
Therefore, the equimeasurability of $f$ and $f^\ast$ gives  the equimeasurability of  $f \1_{A_\sigma}$ and $f^\ast \1_{[0,\sigma]}$, and the equimeasurability of $f \1_{A^c_\sigma}$ and $f^\ast \1_{[\sigma,\infty]}.$ Notice that the set $A_\sigma$ increase with $\sigma.$ Further, the equimeasurability and the layer cake  representation theorem imply the equations (\ref{eq:equimeas}).

Next we consider the case where $\sigma$ is not in the range of $\mu_f.$  Let $\lambda_0=f^\ast(\sigma).$

If $\lambda_0=0,$ we have $\mu\{x:|f(x)|>0\}=:\sigma_0<\sigma.$ In this case, we choose $A_\sigma:=\supp(f)\cup B_\sigma,$ where $B_\sigma$ has measure $\sigma-\sigma_0$ and is disjoint from $\supp(f).$ It is obvious that
\[ \int_{A_\sigma} |f|^p \,d\mu=\int_{\supp(f)} |f|^p\,d\mu=\int_0^{\sigma_0} |f^\ast(t)|^p \,dt= \int_0^{\sigma} |f^\ast(t)|^p \,dt \]
from $\sigma_0$ lying in the range of $\mu_f.$

If $\lambda_0>0,$  we have $\mu_f(\lambda_0)=:\sigma_0<\sigma\leq \sigma_1:=\mu_f(\lambda_0-).$
This shows that
\begin{equation}\label{eq:f-ast-cons} f^\ast(t)=\lambda_0,    \qquad t\in [\sigma_0,\sigma_1),  \end{equation}
from the definition of $f^\ast.$   We can prove that
\begin{equation}\label{eq:fast2} \sigma_1=\mu\{x:|f(x)|\geq \lambda_0 \}.\end{equation}
Combining with $\mu_f(\lambda_0)=\sigma^0,$ we obtain that the set $B:= \{x:|f(x)|=\lambda_0  \}$ has measure $\sigma_1-\sigma_0.$ Set $A_\sigma=\{x:|f(x)|>f^\ast(\sigma)\} \cup B_\sigma$ where $B_\sigma$ is a subset of $B$ with $\mu(B_\sigma)=\sigma-\sigma_0.$ It holds that $\mu(A_\sigma)=\sigma$ and
\[ \int_{A_\sigma} |f|^p \,d\mu=\int_{\{x:|f(x)|>f^\ast(\sigma)\}} |f|^p\,d\mu+\int_{B_\sigma} |f|^p\,d\mu  =\int_0^{\sigma_0} |f^\ast(t)|^p \,dt+[f^\ast(\sigma)]^p(\sigma-\sigma_0)= \int_0^{\sigma} |f^\ast(t)|^p \,dt  \] from $\sigma_0$ lying in the range of $\mu_f$ and (\ref{eq:f-ast-cons}).
It remains to prove (\ref{eq:fast2}). Since $\{x:|f(x)|\geq \lambda_0 \}=\cap_{n\in \N_+} \{x:|f(x)| > \lambda_0-\frac{1}{n} \}$ and $\mu_f(\lambda_0-\frac{1}{n})=\mu(\{x:|f(x)| > \lambda_0-\frac{1}{n} \})<\infty$ for $f$ vanishing at infinity, we have
\[ \mu\{x:|f(x)|\geq \lambda_0 \}=\lim_{n\to \infty} \mu_f(\lambda_0-\frac{1}{n})=\mu_f(\lambda_0-)=\sigma_1.\]

In all the cases above, we have
\[ f(x) \geq f(y),  \mbox{ for any } x\in A_\sigma,\, y\in A_\sigma^c.   \]
Hence it hold for any $B_\sigma$ with $\mu(B_\sigma)=\sigma$ that
\[ \int_{B_\sigma} |f|^p \,d\mu  = \int_{B_\sigma\cap A_\sigma} |f|^p \,d\mu +\int_{B_\sigma\cap A_\sigma^c} |f|^p \,d\mu \leq  \int_{B_\sigma\cap A_\sigma} |f|^p \,d\mu +\int_{B^c_\sigma\cap A_\sigma} |f|^p \,d\mu =   \int_{A_\sigma} |f|^p \,d\mu  \]
and
\[\int_{B_\sigma^c} |f|^p \,d\mu  = \int_{B^c_\sigma\cap A_\sigma} |f|^p \,d\mu +\int_{B^c_\sigma\cap A_\sigma^c} |f|^p \,d\mu \geq \int_{B_\sigma\cap A^c_\sigma} |f|^p \,d\mu +\int_{B^c_\sigma\cap A_\sigma^c} |f|^p \,d\mu = \int_{A_\sigma^c} |f|^p \,d\mu.\]
\end{proof}

\section{Existence of best approximation}
\begin{thm}\label{thm-exist}
Let $0<p< \infty$ and $\sigma>0.$ Then for a function $f$ in $L_p$ or $L_{q,\infty}, 0<q<p,$ there exists a best approximation $f_\sigma$ to $f$ from $\Sigma_\sigma$  in the $L_p$-norm, i.e.,
\[  E_\sigma(f)_p=\|f-f_\sigma\|_{L_p}.  \]
\end{thm}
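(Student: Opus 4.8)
The plan is to use Lemma~\ref{lem:basic} to identify the candidate best approximation explicitly, and then verify it really is optimal by the extremal inequalities already established there. Given $f$ in $L_p$ (or in $L_{q,\infty}$ with $0<q<p$), let $A_\sigma$ be the set of measure $\sigma$ produced by Lemma~\ref{lem:basic}, and set $f_\sigma:=f\cdot\1_{A_\sigma}$. Then $f_\sigma\in\Sigma_\sigma$ since $\mu(\supp f_\sigma)\le\mu(A_\sigma)=\sigma$, and
\[
\|f-f_\sigma\|_{L_p}^p=\int_{A_\sigma^c}|f|^p\,d\mu=\int_\sigma^\infty |f^\ast(t)|^p\,dt .
\]
On the other hand, for \emph{any} $g\in\Sigma_\sigma$, writing $B_\sigma=\supp g$ (a set of measure at most $\sigma$, which we may enlarge to have measure exactly $\sigma$ without increasing $\|f-g\|_{L_p}$ by the monotonicity just used), we have $f-g=f$ on $B_\sigma^c$, hence
\[
\|f-g\|_{L_p}^p\ge\int_{B_\sigma^c}|f|^p\,d\mu\ge\int_{A_\sigma^c}|f|^p\,d\mu=\|f-f_\sigma\|_{L_p}^p ,
\]
the middle inequality being exactly the second extremal estimate in Lemma~\ref{lem:basic}. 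This shows $E_\sigma(f)_p=\|f-f_\sigma\|_{L_p}$, so $f_\sigma$ is a best approximation.

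The one genuine issue to address is that Lemma~\ref{lem:basic} is stated for $f\in\M_0$, i.e.\ functions vanishing at infinity, so I must first check that the hypotheses $f\in L_p$ or $f\in L_{q,\infty}$ with $0<q<p$ each force $f\in\M_0$. For $f\in L_p$ this is immediate from Chebyshev's inequality: $\mu_f(\lambda)\le\lambda^{-p}\|f\|_{L_p}^p<\infty$ for every $\lambda>0$. For $f\in L_{q,\infty}$ the definition of the weak norm gives $t^{1/q}f^\ast(t)\le\|f\|_{L_{q,\infty}}$, hence $f^\ast(t)\to 0$ as $t\to\infty$, which translates (via the defining relation between $f^\ast$ and $\mu_f$) into $\mu_f(\lambda)<\infty$ for all $\lambda>0$; so again $f\in\M_0$. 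With $f\in\M_0$ in hand, Lemma~\ref{lem:basic} applies and gives the set $A_\sigma$ together with both the equimeasurability identities and the extremal inequalities used above.

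A second, minor point worth recording is why the case $f\in L_{q,\infty}$ with $q<p$ is included at all: although such $f$ need not lie in $L_p$, the quantity $E_\sigma(f)_p$ is still finite (and the argument above still produces a minimizer) because the tail $\int_\sigma^\infty|f^\ast(t)|^p\,dt$ converges. Indeed $f^\ast(t)\le \|f\|_{L_{q,\infty}}\,t^{-1/q}$, so $|f^\ast(t)|^p\le C\,t^{-p/q}$ with $p/q>1$, and the integral over $[\sigma,\infty)$ is finite; this makes $\|f-f_\sigma\|_{L_p}<\infty$, consistent with the remark after \eqref{error} that membership of $f$ in $L_p$ is not required.

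I do not anticipate a serious obstacle: the heavy lifting—constructing the nested extremal sets $A_\sigma$ and proving the extremal inequalities—has already been done in Lemma~\ref{lem:basic}, and what remains is the bookkeeping described above (verifying $f\in\M_0$, exhibiting $f_\sigma=f\1_{A_\sigma}$, and reducing an arbitrary competitor to one supported on a set of measure exactly $\sigma$). If anything requires a little care it is the last reduction, but it follows cleanly from the monotonicity statement $\int_{B_\sigma}|f|^p\le\int_{A_\sigma}|f|^p$ in Lemma~\ref{lem:basic} applied to an enlargement of $\supp g$.
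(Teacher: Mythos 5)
Your proof is correct and follows essentially the same route as the paper: take $f_\sigma=f\1_{A_\sigma}$ with $A_\sigma$ from Lemma~\ref{lem:basic} and use the extremal inequality $\int_{B_\sigma^c}|f|^p\ge\int_{A_\sigma^c}|f|^p$ to show optimality. Your additional checks (that $f\in\M_0$ via Chebyshev, and that the tail integral converges for $f\in L_{q,\infty}$ rather than deferring finiteness to Theorem~\ref{thm-app1}) are welcome tightenings of details the paper leaves implicit, but they do not change the argument.
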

Note that the best approximation does not lying in $L_p$  for a function $f$ in $L_{q,\infty},$ but not in $L_p$.
\begin{proof}
For any function  $g$  in $\Sigma_\sigma$ with its support  $B_\sigma,$ it holds
\[ \|f-g\|_{L_p}^p = \int_{B_\sigma} |f-g|^p \,d\mu +\int_{B_\sigma^c} |f|^p \,d\mu \geq \int_{B_\sigma^c} |f|^p \,d\mu  \geq \int_{A_\sigma^c} |f|^p \,d\mu= \int_\sigma^\infty |f^\ast(t)|^p\,dt.  \]
On the other hands, let $A_\sigma$ be given as in Lemma \ref{lem:basic},  we have
\[ \|f-f \1_{A_\sigma}\|_{L_p}^p = \int_\sigma^\infty |f^\ast(t)|^p\,dt.   \]
Therefore, $f \1_{A_\sigma}$ is a best approximation of $f$ from $\Sigma_\sigma$ in $L_p-$norm, and the error is given by
\[  E_\sigma(f)_p=\int_\sigma^\infty |f^\ast(t)|^p\,dt.  \]
For a function $f\in L_p$ the error $E_\sigma(f)_p$ is finite, while for $f\in L_{q,\infty} (0<q<p)$ it will be proved that $E_\sigma(f)_p$ is finite in Theorem \ref{thm-app1} which implies the existence of best approximation.
\end{proof}

\section{Characterization of approximation spaces when $q=\infty$ }

We characterize the approximation spaces  $A_{p,\infty}^\alpha$ in this section, i.e.,  given $0<p<\infty$ and $\alpha>0,$ for which function $f$   it holds
\[ E_\sigma(f)_p \leq C\sigma^{-\alpha},  \qquad\sigma>0, \]
for some constant $C.$
\begin{thm}\label{thm-app1}
Let $f\in \M,$  $0<p<\infty$ and $\alpha >0.$ Then
\begin{equation}{\label{Jackson}} E_\sigma(f)_p \leq C\sigma^{-\alpha}
\end{equation}
for some constant $M>0$
if and only if $f\in L_{p_1,\infty},$ where $\alpha=\frac{1}{p_1}-\frac{1}{p}.$
Moreover the infimum $C_0$ of all $C$ which satisfy (\ref{Jackson}) is equivalant to $\|f\|_{L_{p_1,\infty}}$ in the sense that
\[   c_1 \|f\|_{L_{p_1,\infty}} \leq C_0 \leq   c_2 \|f\|_{L_{p_1,\infty}}  \]
where two constants $c_1$ and $c_2$ depend  only on $p$ and $\alpha.$
\end{thm}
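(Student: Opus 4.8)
The plan is to reduce the whole statement to an elementary estimate on the decreasing rearrangement $f^\ast$, using the exact formula for the error that is already implicit in Lemma~\ref{lem:basic}. First I would record that, arguing exactly as in the proof of Theorem~\ref{thm-exist} but invoking only the set $A_\sigma$ of Lemma~\ref{lem:basic} (its minimality clause and the equimeasurability identity), for every $f\in\M_0$ and every $\sigma>0$ one has
\[
E_\sigma(f)_p^{\,p}=\int_\sigma^\infty |f^\ast(t)|^p\,dt ,
\]
with the two sides finite or infinite together. If instead $f\notin\M_0$, then $\mu_f(\lambda_0)=\infty$ for some $\lambda_0>0$; removing any set of measure $\le\sigma$ still leaves $|f|>\lambda_0$ on a set of infinite measure, so $E_\sigma(f)_p=\infty$ for all $\sigma$, while also $f^\ast(t)\ge\lambda_0$ for all $t$, so $f\notin L_{p_1,\infty}$. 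Hence in that case both conditions of the theorem fail and there is nothing to prove, and I may assume $f\in\M_0$. It then remains to show that the bound $\int_\sigma^\infty|f^\ast(t)|^p\,dt\le C^p\sigma^{-p\alpha}$ for all $\sigma>0$ holds for some $C$ if and only if $\sup_{t>0}t^{1/p_1}f^\ast(t)<\infty$, together with the two-sided comparison of constants. Throughout I would use the algebraic identity $p\alpha+1=p/p_1$ (equivalent to $\alpha=\tfrac1{p_1}-\tfrac1p$) and the fact that $\alpha>0$ forces $p/p_1>1$.

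For sufficiency, I would assume $A:=\|f\|_{L_{p_1,\infty}}<\infty$, so that $f^\ast(t)\le A\,t^{-1/p_1}$, and simply integrate: since $p/p_1>1$,
\[
E_\sigma(f)_p^{\,p}=\int_\sigma^\infty |f^\ast(t)|^p\,dt\le A^p\int_\sigma^\infty t^{-p/p_1}\,dt=\frac{A^p}{p\alpha}\,\sigma^{-p\alpha},
\]
i.e. $E_\sigma(f)_p\le (p\alpha)^{-1/p}A\,\sigma^{-\alpha}$, so (\ref{Jackson}) holds and $C_0\le c_2\|f\|_{L_{p_1,\infty}}$ with $c_2=(p\alpha)^{-1/p}$.

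For necessity, I would assume (\ref{Jackson}) with constant $C$ and exploit monotonicity of $f^\ast$: since $f^\ast(t)\ge f^\ast(\sigma)$ on $[\sigma/2,\sigma]$,
\[
\tfrac{\sigma}{2}\,|f^\ast(\sigma)|^p\le\int_{\sigma/2}^{\sigma}|f^\ast(t)|^p\,dt\le E_{\sigma/2}(f)_p^{\,p}\le C^p(\sigma/2)^{-p\alpha}=2^{p\alpha}C^p\sigma^{-p\alpha},
\]
which by $p\alpha+1=p/p_1$ gives $\sigma^{1/p_1}f^\ast(\sigma)\le 2^{\alpha+1/p}C$ for every $\sigma>0$; taking the supremum in $\sigma$ and then the infimum over admissible $C$ yields $c_1\|f\|_{L_{p_1,\infty}}\le C_0$ with $c_1=2^{-\alpha-1/p}$. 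Both constants depend only on $p$ and $\alpha$.

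I do not expect a genuine obstacle here: the substance is already in Lemma~\ref{lem:basic}, which collapses the approximation problem to a single monotone function, after which both implications are one-line integral and monotonicity computations. The only point requiring a little care is the bookkeeping of infinite values — handling $f\notin\M_0$, and handling $f\in\M_0$ for which $\int_\sigma^\infty|f^\ast|^p=\infty$ at small $\sigma$ — which the stated convention absorbs cleanly. One could also replace $\sigma/2$ by $\lambda\sigma$ with $\lambda\in(0,1)$ and optimize over $\lambda$ to sharpen $c_1$ slightly, but this is inessential.
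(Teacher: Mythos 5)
Your proposal is correct and follows essentially the same route as the paper: both directions reduce to the identity $E_\sigma(f)_p^p=\int_\sigma^\infty |f^\ast(t)|^p\,dt$, with sufficiency by integrating $f^\ast(t)\le \|f\|_{L_{p_1,\infty}}t^{-1/p_1}$ and necessity by bounding $f^\ast$ at a point via its average over a dyadic interval (you use $[\sigma/2,\sigma]$ where the paper uses $[\sigma,2\sigma]$, yielding the same constants $c_1=2^{-\alpha-1/p}$ and $c_2=(\alpha p)^{-1/p}$). Your explicit treatment of the case $f\notin\M_0$ is a small but welcome addition of rigor that the paper leaves implicit.
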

In other words, Theorem \ref{thm-app1} means that
\[A_{p,\infty}^\alpha=L_{p_1,\infty}   \]
 where $\alpha=\frac{1}{p_1}-\frac{1}{p}.$

\begin{proof}
If $f\in L_{p_1,\infty}$ with $p_1=\frac{p}{\alpha p+1},$  then $\|f\|_{L_{p_1,\infty}}:=\sup_{t>0} t^{\frac{1}{p_1}} f^\ast(t) <\infty,$ that is, $f^\ast(t) \leq  \|f\|_{L_{p_1,\infty}} t^{-\frac{1}{p_1}}$ for all $t>0.$
We have
\[
E_\sigma(f)_p^p=\int_{[\sigma,\infty]} f^\ast(t)^p \,dt \leq  \frac{p_1}{p-p_1} \|f\|_{L_{p_1,\infty}}^p \sigma^{-\frac{p}{p_1}+1}.
\]
Therefore, if $f\in L_{p_1,\infty},$ then clearly
\begin{equation}\label{eq1} E_\sigma(f)_p \leq  (\alpha p)^{-1/p}\|f\|_{L_{p_1,\infty}} \sigma^{-\alpha}.  \end{equation}

On the other hand, if  $E_\sigma(f)_p \leq C\sigma^{-\alpha} $ for some constant $C>0,$ then
\[ f^\ast(2\sigma)^p \leq \frac{1}{\sigma} \int_{[\sigma,2\sigma]} f^\ast(t)^p \,dt \leq \frac{1}{\sigma} E_\sigma(f)_p^p \leq C^p \sigma^{-\alpha p -1}.  \]
Therefore, we have
\[  f^\ast(\sigma) \leq 2^{\alpha +\frac{1}{p}} C \sigma^{-1/p_1}, \quad    \sigma >0. \]
It implies $f\in L_{p_1,\infty}$ and
\begin{equation}\label{eq2} \|f\|_{L_{p_1,\infty}} \leq 2^{\alpha +\frac{1}{p}} C.  \end{equation}

By inequalities (\ref{eq1}) and  (\ref{eq2}), we have
\[  2^{-\alpha-1/p} \|f\|_{L_{p_1,\infty}}  \leq  M_0\leq  (\alpha p)^{-1/p}\|f\|_{L_{p_1,\infty}}.  \]
\end{proof}

\section{K-functional and best approximation}
We start with Bernstein-type inequality of finite-supported functions, which is used to prove  the reverse part of Theorem  \ref{thm-DR}.
\begin{thm}[Bernstein's inequality]
Let $0<p_1<p<\infty$ and $\phi \in \Sigma_\sigma \cap L_p.$   Then  $\phi\in L_{p_1,\infty}$ and  it also holds
\begin{equation}{\label{Bern}} \|\phi\|_{L_{p_1,\infty}} \leq C \sigma^r \|\phi \|_{L_p} \end{equation}
where $r=\frac{1}{p_1}-\frac{1}{p}.$
\end{thm}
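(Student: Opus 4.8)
The plan is to reduce the Bernstein inequality to a direct computation with the decreasing rearrangement, since $\phi$ is supported on a set of measure at most $\sigma$. Because $\phi \in \Sigma_\sigma$, its distribution function $\mu_\phi(\lambda)$ is bounded above by $\sigma$ for every $\lambda \geq 0$; equivalently, $\phi^\ast(t) = 0$ for all $t \geq \sigma$. This is the single structural fact about finite-supported functions we will exploit, and it makes $\phi^\ast$ supported on $[0,\sigma]$.

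First I would fix $t \in (0,\sigma)$ and bound $\phi^\ast(t)$ in terms of $\|\phi\|_{L_p}$. Since $\phi^\ast$ is non-increasing, for any such $t$ we have
\[
  [\phi^\ast(t)]^p \cdot t = \int_0^t [\phi^\ast(t)]^p \, ds \leq \int_0^t [\phi^\ast(s)]^p \, ds \leq \int_0^\infty [\phi^\ast(s)]^p \, ds = \|\phi\|_{L_p}^p,
\]
using the equimeasurability of $\phi$ and $\phi^\ast$ in the last equality. Hence $t^{1/p}\phi^\ast(t) \leq \|\phi\|_{L_p}$ for all $t > 0$, which already shows $\phi \in L_{p,\infty}$ with $\|\phi\|_{L_{p,\infty}} \leq \|\phi\|_{L_p}$. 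Now I would multiply and divide by a power of $t$ to switch the Lorentz index from $p$ to $p_1$: for $t \in (0,\sigma)$,
\[
  t^{1/p_1} \phi^\ast(t) = t^{1/p_1 - 1/p} \cdot t^{1/p}\phi^\ast(t) \leq t^{r} \|\phi\|_{L_p} \leq \sigma^{r} \|\phi\|_{L_p},
\]
since $r = \frac{1}{p_1} - \frac{1}{p} > 0$ and $t < \sigma$ means $t^r < \sigma^r$. For $t \geq \sigma$ the left-hand side is zero because $\phi^\ast(t) = 0$. Taking the supremum over all $t > 0$ gives
\[
  \|\phi\|_{L_{p_1,\infty}} = \sup_{t>0} t^{1/p_1}\phi^\ast(t) \leq \sigma^{r} \|\phi\|_{L_p},
\]
which is \eqref{Bern} with $C = 1$.

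There is no real obstacle here: the only thing to be careful about is the justification that $\phi^\ast$ vanishes on $[\sigma,\infty)$, which follows immediately from $\mu(\supp\phi) \leq \sigma$ and the definition $\phi^\ast(t) = \inf\{\lambda : \mu_\phi(\lambda) \leq t\}$, together with the fact that $\mu_\phi(0) = \mu(\supp\phi) \leq \sigma$. One could also note that the argument in fact yields the constant $C=1$, and that the restriction $\phi \in L_p$ is needed only to ensure the right-hand side is finite; if $\phi \notin L_p$ the inequality holds trivially. The statement that $\phi \in L_{p_1,\infty}$ then follows from finiteness of $\sigma^r\|\phi\|_{L_p}$.
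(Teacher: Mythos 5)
Your proposal is correct and follows essentially the same route as the paper's proof: both reduce the supremum to $t\in(0,\sigma]$ using that $\phi^\ast$ vanishes beyond $\sigma$, and both use the averaging bound $[\phi^\ast(t)]^p \le \frac{1}{t}\int_0^t [\phi^\ast(s)]^p\,ds$ coming from the monotonicity of $\phi^\ast$, followed by $t^r\le\sigma^r$. The only difference is cosmetic (you work with first powers rather than $p$-th powers) and your observation that one can take $C=1$ is a pleasant extra.
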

\begin{proof}
From $\phi \in \Sigma_\sigma$, we have
\[ \|\phi \|_{L_{p_1,\infty}} := \sup_{t>0} t^{\frac{1}{p_1}} \phi^\ast(t)= \sup_{0<t\leq \sigma} t^{\frac{1}{p_1}} \phi^\ast(t).  \]
For each $t\in (0,\sigma],$ we see that
\[ (t^{\frac{1}{p_1}} \phi^\ast(t))^p \leq t^{\frac{p}{p_1}}  \frac{\int_0^t \phi^\ast(s)^p\,ds }{t}=t^{p r } \int_0^t \phi^\ast(s)^p\,ds \leq t^{rp} \|\phi\|_{L_p}^p \]
and (\ref{Bern}) follows by taking an supremum over $t\in (0,\sigma].$
\end{proof}

For $0<p_1<p<\infty,$ the K-functional for a function $f\in L_p+L_{p_1,\infty}$ is defined by
\[K(f,t;L_p,L_{p_1,\infty}):=  \inf\{ \|f_0\|_{L_p} +t\|f_1\|_{L_{p_1,\infty}}: f=f_0+f_1  \}. \]
We obtain direct and inverse theorem characterized by the above K-functional.
\begin{thm}\label{thm-DR}
Let $0<p_1<p<\infty$ and $f\in L_p+L_{p_1,\infty}.$  Then we have
\begin{equation}\label{ba-Kf} E_\sigma(f)_p\leq  C K(f,\sigma^{-r}; L_p, L_{p_1,\infty}) \end{equation}
and on the other hand
\begin{equation}\label{inverse}
 K(f,\sigma;L_p,L_{p_1,\infty}) \leq C \sigma^{-r} \int_0^\sigma [ t^r E_t(f)_p] \frac{dt}{t},
\end{equation}
where $r=\frac{1}{p_1}-\frac{1}{p}.$
\end{thm}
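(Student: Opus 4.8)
The plan is to exploit the exact formula from Section 3, namely $E_\sigma(f)_p^p = \int_\sigma^\infty f^\ast(t)^p\,dt$ (with $f^\ast$ replaced by $(f-f_\sigma)^\ast$ appropriately), together with the elementary fact that the best approximant to $f$ from $\Sigma_\sigma$ is $f\mathbf 1_{A_\sigma}$, so that approximating $f$ by a finite-supported function is essentially the operation of zeroing out the largest part of $f$. For the direct inequality \eqref{ba-Kf}, I would fix any decomposition $f = f_0 + f_1$ with $f_0 \in L_p$ and $f_1 \in L_{p_1,\infty}$. The idea is to approximate $f$ in $\Sigma_\sigma$ by choosing a set $A$ on which $|f_1|$ is large: more precisely, take $A = \{x : |f_1(x)| > f_1^\ast(\sigma)\}$ (or a suitable subset of measure $\sigma$, as in Lemma \ref{lem:basic}), so that $f\mathbf 1_A \in \Sigma_\sigma$ and
\[
E_\sigma(f)_p \le \|f - f\mathbf 1_A\|_{L_p} = \|f\mathbf 1_{A^c}\|_{L_p} \le \|f_0\|_{L_p} + \|f_1 \mathbf 1_{A^c}\|_{L_p}.
\]
(Here the quasi-triangle inequality in $L_p$ for $p<1$ introduces only a constant depending on $p$.) On $A^c$ we have $|f_1| \le f_1^\ast(\sigma)$, and combining with the tail estimate $\int_\sigma^\infty (f_1^\ast)^p \le \tfrac{p_1}{p-p_1}\|f_1\|_{L_{p_1,\infty}}^p\,\sigma^{1-p/p_1}$ exactly as in the proof of Theorem \ref{thm-app1}, we obtain $\|f_1\mathbf 1_{A^c}\|_{L_p} \le C\,\sigma^{-r}\|f_1\|_{L_{p_1,\infty}}$ with $r = \tfrac1{p_1}-\tfrac1p$. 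Thus $E_\sigma(f)_p \le C(\|f_0\|_{L_p} + \sigma^{-r}\|f_1\|_{L_{p_1,\infty}})$, and taking the infimum over all decompositions gives \eqref{ba-Kf}.

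For the inverse inequality \eqref{inverse} I would use a telescoping/dyadic decomposition of $f$ driven by its best approximants. Let $f_{2^k}$ denote a best approximant of $f$ from $\Sigma_{2^k}$ as in Theorem \ref{thm-exist}, and (choosing $\sigma = 2^n$ without loss of generality) write the telescoping sum $f = f_{2^n} + \sum_{k=n}^{\infty}(f_{2^{k+1}} - f_{2^k})$, interpreting this as a splitting $f = g_0 + g_1$ with $g_0 := f_{2^n} + \sum_{k \ge n}(f_{2^{k+1}} - f_{2^k})$ being the "good" piece and... — more carefully, I would set $g_1 := f - f_{2^n}$ (so $\|g_1\|_{L_p} = E_{2^n}(f)_p$) and then refine: the natural split is $g_0 \in L_p$ carrying the coarse-scale part and $g_1 \in L_{p_1,\infty}$ carrying the fine-scale detail $\sum_{k<n}(f_{2^{k+1}} - f_{2^k})$, with the differences $\phi_k := f_{2^{k+1}} - f_{2^k} \in \Sigma_{2^{k+1}}\cap L_p$ and $\|\phi_k\|_{L_p} \le C E_{2^k}(f)_p$. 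Applying Bernstein's inequality \eqref{Bern} to each $\phi_k$ gives $\|\phi_k\|_{L_{p_1,\infty}} \le C\,2^{kr}E_{2^k}(f)_p$, and since $L_{p_1,\infty}$ is a quasi-normed space one must sum these with care. Then $K(f,\sigma;L_p,L_{p_1,\infty}) \le \|g_0\|_{L_p} + \sigma\|g_1\|_{L_{p_1,\infty}} \lesssim E_{2^n}(f)_p + \sigma\sum_{k<n} 2^{kr} E_{2^k}(f)_p$, and comparing the sum with the integral $\int_0^\sigma t^r E_t(f)_p\,\tfrac{dt}{t}$ (using that $t\mapsto E_t(f)_p$ is non-increasing) and absorbing $\sigma = 2^n$ against the top dyadic term $2^{nr}$ yields \eqref{inverse} after multiplying through by $\sigma^{-r} \cdot \sigma$... i.e. one checks $E_{2^n}(f)_p \lesssim \sigma^{-r}\int_0^\sigma t^r E_t(f)_p\,\tfrac{dt}{t}$ holds as well because $E_t(f)_p \gtrsim E_{2^n}(f)_p$ for $t \le 2^n$.

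The main obstacle I expect is the summability of the $L_{p_1,\infty}$ quasi-norms in the inverse inequality: $L_{p_1,\infty}$ is only a quasi-normed space, so $\|\sum_k \phi_k\|_{L_{p_1,\infty}} \le C\sum_k \|\phi_k\|_{L_{p_1,\infty}}$ is not available directly — the triangle inequality fails. The standard fix is to pass through a genuine norm: since $p_1 < p$, one has (for an appropriate choice, e.g. via the Lorentz space $L_{p_1,1}$ or by the equivalent norm on $L_{p_1,\infty}$ given by $\sup_{E} \mu(E)^{1/p_1 - 1}\int_E |f|$, or simply by using the $\|f^{\ast\ast}\|$-type norm) an equivalent norm on $L_{p_1,\infty}$ that does satisfy the triangle inequality, at the cost of constants depending only on $p_1$; alternatively one uses the standard lemma that in a $q$-normed space $\|\sum \phi_k\| \le (\sum \|\phi_k\|^q)^{1/q}$ and exploits that the $2^{kr}E_{2^k}(f)_p$ are controlled by a geometric-type sum. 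Either route is routine once set up but needs to be stated carefully; everything else reduces to the tail computation already performed in Theorem \ref{thm-app1}, Bernstein's inequality, and the comparison of dyadic sums with integrals.
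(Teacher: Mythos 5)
Your plan follows essentially the same route as the paper: the direct bound (\ref{ba-Kf}) comes from splitting $f=f_0+f_1$ and applying the Jackson-type estimate of Theorem \ref{thm-app1} to $f_1$, and the inverse bound (\ref{inverse}) from the dyadic telescoping of best approximants combined with Bernstein's inequality (\ref{Bern}). The quasi-norm summation issue you flag is real --- the paper simply applies the triangle inequality for $\|\cdot\|_{L_{p_1,\infty}}$ to an infinite sum without comment --- and your proposed fixes (an equivalent norm when $p_1>1$, or the $\mu$-power trick, which then requires the $\ell^\mu$ form of the discrete Hardy inequality downstream) are the standard way to close that gap.
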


\begin{proof} Let $f\in L_p+L_{p_1,\infty}.$ Then, there exist $f_0\in L_p$ and $f_1\in L_{p_1,\infty}$ such that $f=f_0+f_1.$ By Theorem \ref{thm-exist},  $f_1$ has a best approximation $g_\sigma$ from $\Sigma_\sigma$ in $L_p-$norm. Then from equation (\ref{Jackson}) we have
\[
E_\sigma(f)_p\leq \|f-g_\sigma\|_{L_p} \leq \|f_0\|_{L_p} + E_\sigma(f_1)_p \leq C( \|f_0\|_{L_p} +\sigma^{-r}\|f_1\|_{L_{p_1,\infty}})
\]
and (\ref{ba-Kf}) follows by taking an infimum over all decomposition $f=f_0+f_1.$

On the other hand,
by the monotonity of $K(f,t;L_p,L_{p_1,\infty})$ and $E_t(f)_p$ it suffices to prove (\ref{inverse}) for $\sigma=2^m, m\in \Z.$
It is easy to obtain
\[ 2^{-mr} \int_0^{2^m} [ t^r E_t(f)_p] \frac{dt}{t} \geq C_r 2^{-mr} \sum_{k=-\infty}^m 2^{kr} E_{2^k}(f)_p.   \]
 Let $\varphi_m$ be the best approximation of $f$ from $\Sigma_{2^m}$ in $L_p$-norm for each $m\in \Z.$ We have
\begin{align}
K(f, 2^{-mr};L_p,L_{p_1,\infty})&\leq \|f-\varphi_m\|_{L_p} +2^{-mr} \|\varphi\|_{L_{p_1,\infty}}   \nonumber\\
             &\leq E_{2^m}(f)_p+2^{-mr} \sum_{k=-\infty}^m \|\varphi_k-\varphi_{k-1}  \|_{L_{p_1,\infty}} \nonumber\\
             &\leq E_{2^m}(f)_p+2^{-mr} \sum_{k=-\infty}^m 2^{kr} \|\varphi_k-\varphi_{k-1}  \|_{L_p}    \label{eq11}\\
             &\leq E_{2^m}(f)_p+2^{-mr} \sum_{k=-\infty}^m 2^{kr} 2 E_{2^{k-1}}(f)_p   \label{eq21} \\
             &\leq C_r  2^{-mr} \sum_{k=-\infty}^m 2^{kr} E_{2^k}(f)_p \leq  2^{-mr} \int_0^{2^m} [ t^r E_t(f)_p] \frac{dt}{t},\nonumber
\end{align}
where (\ref{eq11}) follows from Bernstein-type inequality (\ref{Bern}) and (\ref{eq21}) from triangular inequality.
\end{proof}

\section{Characterization of Approximation Spaces}
For $0<\theta<1$ and $0<q\leq \infty,$ the interpolation space $(L_p, L_{p_1,\infty})_{\theta,q}$ is defined as the set of all functions $f\in L_p+L_{p_1,\infty}$ such that
\[|f|_{(L_p, L_{p_1,\infty})_{\theta,q}}:= \begin{cases} \left(\displaystyle\int_0^\infty [t^{-\theta} K(f,t;L_p,L_{p_1,\infty})]^q \frac{dt}{t}\right)^{1/q}, & 0< q<\infty\\  \sup_{t>0} t^{-\theta} K(f,t;L_p,L_{p_1,\infty}),   & q=\infty,  \end{cases}  \]
is finite.
We characterize completely the approximation space $A_{p,q}^\alpha$ by means of the interpolation spaces $(L_p, L_{p_1,\infty})_{\theta,q}$ in this section.
\begin{thm}\label{thm:interp} Let $0<p<\infty, 0<\alpha<r<\infty,$ and $r=\frac{1}{p_1}-\frac{1}{p}.$ Then, there holds the following equality
\[A^\alpha_{p,q}=(L_p,L_{p_1,\infty})_{\alpha/r,q} \]
with equivalent norm.
\end{thm}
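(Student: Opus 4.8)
The plan is to deduce the identity $A^\alpha_{p,q}=(L_p,L_{p_1,\infty})_{\alpha/r,q}$ from the two inequalities of Theorem \ref{thm-DR} by the standard discretization argument that converts a $K$-functional estimate into an equivalence of the $(\alpha/r,q)$ interpolation norm with the approximation norm. Throughout put $\theta=\alpha/r$, so that $0<\theta<1$, and note that the change of variable $t=\sigma^{-r}$ turns the defining integral of $|f|_{(L_p,L_{p_1,\infty})_{\theta,q}}$ into a constant multiple of $\left(\int_0^\infty[\sigma^{\theta r}K(f,\sigma^{-r};L_p,L_{p_1,\infty})]^q\frac{d\sigma}{\sigma}\right)^{1/q}=\left(\int_0^\infty[\sigma^{\alpha}K(f,\sigma^{-r};L_p,L_{p_1,\infty})]^q\frac{d\sigma}{\sigma}\right)^{1/q}$, with the obvious sup-analogue when $q=\infty$. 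So it suffices to compare $\sigma^\alpha E_\sigma(f)_p$ and $\sigma^\alpha K(f,\sigma^{-r};L_p,L_{p_1,\infty})$ in the $L_q(d\sigma/\sigma)$ quasinorm.

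The easy direction is $A^\alpha_{p,q}\hookrightarrow(L_p,L_{p_1,\infty})_{\theta,q}$. I would feed the inverse inequality \eqref{inverse} with $\sigma$ replaced by $\sigma^{-r}$, obtaining
\[
\sigma^{\alpha}K(f,\sigma^{-r};L_p,L_{p_1,\infty})\le C\,\sigma^{\alpha-r\cdot\frac{r}{r}\cdot\frac1r}\cdots
\]
— more precisely $K(f,\tau;L_p,L_{p_1,\infty})\le C\tau^{-r\cdot?}$; writing $\tau=\sigma^{-r}$ gives $\sigma^{r}$ in place of $\tau^{-1}$, so \eqref{inverse} reads $K(f,\sigma^{-r})\le C\sigma^{r}\int_0^\sigma t^rE_t(f)_p\frac{dt}{t}$. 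Hence $\sigma^\alpha K(f,\sigma^{-r})\le C\sigma^{\alpha+r}\int_0^\sigma t^r E_t(f)_p\,\frac{dt}{t}= C\sigma^{\alpha+r}\int_0^\sigma t^{r-\alpha}\,[t^\alpha E_t(f)_p]\,\frac{dt}{t}$. Since $r-\alpha>0$, this is a Hardy-type averaging operator applied to the function $t\mapsto t^\alpha E_t(f)_p$, and the classical Hardy inequality in $L_q(d\sigma/\sigma)$ (valid for all $0<q\le\infty$ once the weight exponent $r-\alpha$ is strictly positive) bounds its $L_q(d\sigma/\sigma)$-quasinorm by $C\|\,t^\alpha E_t(f)_p\,\|_{L_q(dt/t)}=C\|f\|_{A^\alpha_{p,q}}$. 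This yields $|f|_{(L_p,L_{p_1,\infty})_{\theta,q}}\le C\|f\|_{A^\alpha_{p,q}}$.

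For the reverse embedding I would use the direct inequality \eqref{ba-Kf}: $E_\sigma(f)_p\le CK(f,\sigma^{-r};L_p,L_{p_1,\infty})$ gives immediately $\sigma^\alpha E_\sigma(f)_p\le C\,\sigma^\alpha K(f,\sigma^{-r};L_p,L_{p_1,\infty})$ pointwise in $\sigma$, so taking $L_q(d\sigma/\sigma)$-quasinorms and undoing the substitution $t=\sigma^{-r}$ produces $\|f\|_{A^\alpha_{p,q}}\le C|f|_{(L_p,L_{p_1,\infty})_{\theta,q}}$ with no further work. One must also check that the two spaces consist of the same functions, not merely that the norms dominate each other on their common domain: every $f\in(L_p,L_{p_1,\infty})_{\theta,q}$ automatically lies in $L_p+L_{p_1,\infty}$, so $E_\sigma(f)_p$ is well defined and finite by Theorem \ref{thm-exist} and Theorem \ref{thm-app1}, while conversely if $\|f\|_{A^\alpha_{p,q}}<\infty$ then $E_\sigma(f)_p\to0$ and in particular $E_{\sigma_0}(f)_p<\infty$ for some $\sigma_0$, which by the representation $E_{\sigma_0}(f)_p^p=\int_{\sigma_0}^\infty f^\ast(t)^p\,dt$ forces $f\1_{A_{\sigma_0}^c}\in L_p$; combined with $f\1_{A_{\sigma_0}}\in\Sigma_{\sigma_0}$ and the Bernstein inequality \eqref{Bern}, which places $\Sigma_{\sigma_0}\cap L_p$ (after truncation) inside $L_{p_1,\infty}$, this shows $f\in L_p+L_{p_1,\infty}$, so the $K$-functional is defined.

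The main obstacle is the Hardy inequality step in the easy direction: one must verify that $\int_0^\sigma t^{r-\alpha}[t^\alpha E_t(f)_p]\frac{dt}{t}$, after multiplication by $\sigma^{\alpha+r}$, really is controlled in $L_q(d\sigma/\sigma)$ by $\|t^\alpha E_t(f)_p\|_{L_q(dt/t)}$ — the exponents must be tracked carefully (it is precisely the hypothesis $\alpha<r$ that makes the averaging kernel integrable and the Hardy operator bounded, and the case $q<1$ requires the quasi-norm form of Hardy's inequality). A secondary technical point is the passage from the continuous integrals to the dyadic sums and back, exactly as in the proof of Theorem \ref{thm-DR}; since $E_t(f)_p$ and $K(f,t)$ are both monotone in $t$, comparing $\int$ over $[2^{k},2^{k+1}]$ with the value at $2^k$ loses only a fixed constant, so this is routine but should be stated. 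Everything else is bookkeeping with the substitution $t=\sigma^{-r}$ and the elementary properties of the approximation- and interpolation-quasinorms already recorded in the introduction and Section 6.
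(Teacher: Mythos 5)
Your strategy is exactly the paper's: use \eqref{ba-Kf} to get $\|f\|_{A^\alpha_{p,q}}\le C|f|_{(L_p,L_{p_1,\infty})_{\alpha/r,q}}$, and use \eqref{inverse} together with the substitution $t=\sigma^{-r}$ and a Hardy inequality for the reverse bound. Two points, however, need repair. First, there is a sign slip that, as written, breaks the key step. The estimate actually established in the proof of Theorem \ref{thm-DR} is
\[
K(f,\sigma^{-r};L_p,L_{p_1,\infty})\le C\,\sigma^{-r}\int_0^\sigma t^{r}E_t(f)_p\,\frac{dt}{t}
\]
(the displayed form of \eqref{inverse} is notationally garbled, but the dyadic computation in its proof gives the prefactor $\sigma^{-r}$, not $\sigma^{r}$). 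Consequently
\[
\sigma^{\alpha}K(f,\sigma^{-r})\le C\,\sigma^{-(r-\alpha)}\int_0^\sigma t^{r-\alpha}\bigl[t^{\alpha}E_t(f)_p\bigr]\,\frac{dt}{t},
\]
with a \emph{decaying} prefactor $\sigma^{-(r-\alpha)}$. Your version carries $\sigma^{\alpha+r}$ in front; with a growing prefactor the $L_q(d\sigma/\sigma)$ quasinorm of the left-hand side cannot be dominated by $\|t^{\alpha}E_t(f)_p\|_{L_q(dt/t)}$ (the integral diverges as $\sigma\to\infty$ unless $f=0$). With the corrected exponent the expression is precisely the Hardy averaging operator you describe, and the hypothesis $\alpha<r$ enters exactly as you say.

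Second, you flag but do not prove the Hardy inequality in the range $0<q<1$, and this is where the paper does its real work. For $q<1$ the inequality fails for general non-negative integrands; the paper's Lemma \ref{lem:Hardy} establishes \eqref{Hardy} only for integrands of the special form $\phi(t)=t^{r}\varphi(t)$ with $\varphi$ non-negative and decreasing, by discretizing to dyadic sequences and invoking the discrete Hardy inequality from \cite{devore1993constructive}. The hypothesis is met here precisely because $E_t(f)_p$ is non-increasing in $t$; an appeal to ``the classical Hardy inequality'' without exploiting this monotonicity does not cover $q<1$. Your closing observation that one must also verify the two spaces consist of the same functions (so that both quasinorms are defined) is a legitimate point that the paper leaves implicit, and your sketch of it is sound.
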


Following Theorem 5.3.1 \cite{bergh2012interpolation}, which characterize the interpolation spaces between Lorentz spaces, we have the following Theorem \ref{thm:final}. That is, the approximation spaces of finite-supported functions in $L_p$-norm is Lorentz spaces.
\begin{thm}\label{thm:final}
Let $0<p<\infty, 0<q\leq \infty,$ and  $0<\alpha<\infty.$ Then, there are the approximation spaces
\[ A^\alpha_{p,q}= L_{p_1,q} \]
with equivalent norm,
where $p_1$ satisfies $\alpha=\frac{1}{p_1}-\frac{1}{p}.$
\end{thm}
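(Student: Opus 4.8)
\section*{Proof proposal for Theorem \ref{thm:final}}

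The plan is to deduce Theorem \ref{thm:final} from Theorem \ref{thm:interp} together with the classical identification of the real interpolation spaces of a couple of Lorentz spaces. The one point that needs care is that Theorem \ref{thm:interp} is proved only under the restriction $\alpha<r=\frac{1}{p_1}-\frac{1}{p}$, while in Theorem \ref{thm:final} the target index is exactly $\frac{1}{p_1}=\frac{1}{p}+\alpha$. Hence the symbol $p_1$ cannot play the same role in both statements, and an auxiliary index has to be inserted between $p$ and $p_1$.

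First I would fix $0<p<\infty$, $0<q\leq\infty$, $\alpha>0$, and let $p_1$ be defined by $\frac{1}{p_1}=\frac{1}{p}+\alpha$, so that $0<p_1<p$. I then choose any $r$ with $r>\alpha$ (for instance $r=\alpha+1$) and define the auxiliary index $\rho$ by $\frac{1}{\rho}=\frac{1}{p}+r$, so that $0<\rho<p<\infty$ and $r=\frac{1}{\rho}-\frac{1}{p}$. With this choice the hypotheses $0<\alpha<r<\infty$, $r=\frac{1}{\rho}-\frac{1}{p}$ of Theorem \ref{thm:interp}, applied with $\rho$ in the role of its ``$p_1$'', are met, and we get
\[ A^\alpha_{p,q}=(L_p,L_{\rho,\infty})_{\theta,q}, \qquad \theta:=\frac{\alpha}{r}\in(0,1), \]
with equivalence of quasinorms (the constants depending on $p,\alpha,q$, since $r$ has been pinned down by $\alpha$).

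Next I would invoke the characterization of interpolation spaces between Lorentz spaces, Theorem 5.3.1 of \cite{bergh2012interpolation}, applied to the admissible couple $L_p=L_{p,p}$ and $L_{\rho,\infty}$ (admissible since $\rho\neq p$): for $0<\theta<1$ and $0<q\leq\infty$,
\[ (L_p,L_{\rho,\infty})_{\theta,q}=L_{p_\theta,q}, \qquad \frac{1}{p_\theta}=\frac{1-\theta}{p}+\frac{\theta}{\rho}, \]
again with equivalent quasinorms. A one-line computation gives
\[ \frac{1}{p_\theta}=\frac{1}{p}+\theta\Big(\frac{1}{\rho}-\frac{1}{p}\Big)=\frac{1}{p}+\frac{\alpha}{r}\cdot r=\frac{1}{p}+\alpha=\frac{1}{p_1}, \]
so $p_\theta=p_1$. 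Chaining the two displayed identities yields $A^\alpha_{p,q}=L_{p_1,q}$ with equivalent quasinorms, which is the assertion; the case $q=\infty$ is in addition already contained in Theorem \ref{thm-app1}, and the two descriptions agree.

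The only real obstacle is bookkeeping: checking that the auxiliary index $\rho$ can always be chosen so that Theorem \ref{thm:interp} is applicable (it can, for any $r>\alpha$), that the resulting space does not depend on the auxiliary choice (it cannot, since it equals $A^\alpha_{p,q}$), and that quasinorm equivalences compose, which they do up to multiplication of the constants. A secondary technical remark: Theorem 5.3.1 of \cite{bergh2012interpolation} is classically phrased in the Banach range $1\leq p,\rho\leq\infty$; for $0<p<1$ one uses the quasi-Banach version of the same identification, the $K$-method and its interaction with Lorentz spaces carrying over verbatim to quasi-Banach couples, which is the form being cited here.
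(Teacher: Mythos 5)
Your proposal is correct and follows essentially the same route as the paper: apply Theorem \ref{thm:interp} with an auxiliary index (your $\rho$, the paper's ``$p_1$'' in Theorem \ref{thm:interp}) and then identify $(L_p,L_{\rho,\infty})_{\alpha/r,q}$ via Theorem 5.3.1 of \cite{bergh2012interpolation}, computing that the resulting Lorentz index satisfies $\frac{1}{p_1}=\frac{1}{p}+\alpha$. Your version is in fact cleaner than the paper's, which recycles the symbol $p_1$ for two different indices; your explicit disambiguation and the remark on the quasi-Banach range of the interpolation theorem are welcome but do not change the argument.
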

\begin{proof}
By Theorem 5.3.1 \cite{bergh2012interpolation}, we have $ (L_p,L_{p_1,\infty})_{\alpha/r,q} = L_{p_2,q}, $
where
\[  \frac{1}{p_2}=\frac{1-\alpha/r}{p} +\frac{\alpha/r}{p_1}. \]
Since $r=\frac{1}{p_1}-\frac{1}{p},$  the index $p_2$ satisfies   $\alpha=\frac{1}{p_2}-\frac{1}{p}.$ Therefore we prove Theorem \ref{thm:final} from Theorem \ref{thm:interp}.
\end{proof}

It remains to prove Theorem \ref{thm:interp}, for which we need a variant of Hardy's inequality.
\begin{lem}\label{lem:Hardy}
Let $0<q<\infty$ and $\theta>0.$ Then the inequality
\begin{equation}\label{Hardy}
 \int_0^\infty \left(\sigma^{-\theta} \int_0^\sigma \phi(t) \frac{dt}{t}  \right)^q \frac{dt}{t}
\leq C(\theta,q) \int_0^\infty \left(t^{-\theta} \phi(t)\right)^q \frac{dt}{t}.
\end{equation}
is valid for $\phi(t)=t^r \varphi(t),$  where $r>0$ and $\varphi$ is  any non-negative decreasing function on $\R_+$.
\end{lem}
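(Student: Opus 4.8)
The plan is to reduce the inequality to a single pointwise estimate on the Hardy average
\[
\psi(\sigma):=\sigma^{-\theta}\int_0^\sigma\phi(t)\,\frac{dt}{t},\qquad \sigma>0,
\]
and then integrate, exploiting the dilation invariance of the measure $\frac{dt}{t}$. Since $x\mapsto x^q$ is not subadditive for $q\ge1$ and the classical Hardy inequality \emph{fails} when $0<q<1$, the entire content is to use the structural hypothesis $\phi(t)=t^r\varphi(t)$ with $\varphi$ non-increasing in order to turn the averaging operator into a geometrically decaying sum. Concretely, partitioning $(0,\sigma]=\bigcup_{j\ge1}\bigl(2^{-j}\sigma,\,2^{-j+1}\sigma\bigr]$ and bounding $\varphi$ on each dyadic ring by its value at the left endpoint gives
\[
\int_0^\sigma\phi(t)\,\frac{dt}{t}=\int_0^\sigma t^{r-1}\varphi(t)\,dt\le\sum_{j\ge1}\varphi(2^{-j}\sigma)\int_{2^{-j}\sigma}^{2^{-j+1}\sigma}t^{r-1}\,dt=\frac{2^{r}-1}{r}\sum_{j\ge1}\phi(2^{-j}\sigma),
\]
where in the last step the identity $\varphi(2^{-j}\sigma)=(2^{-j}\sigma)^{-r}\phi(2^{-j}\sigma)$ cancels all powers of $2$. (If this quantity is infinite there is nothing to prove, so one may assume the right-hand side of (\ref{Hardy}) is finite.)

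The second step is purely algebraic. Writing $g(u):=u^{-\theta}\phi(u)$ one has $\sigma^{-\theta}\phi(2^{-j}\sigma)=2^{-j\theta}g(2^{-j}\sigma)$, so the previous display becomes the pointwise bound
\[
\psi(\sigma)\le\frac{2^{r}-1}{r}\sum_{j\ge1}2^{-j\theta}\,g(2^{-j}\sigma),\qquad\sigma>0.
\]
The third step is to apply the $L^q\bigl(\frac{d\sigma}{\sigma}\bigr)$ quasi-norm to both sides and move it inside the sum: via the elementary inequality $\bigl(\sum_j a_j\bigr)^q\le\sum_j a_j^{\,q}$ when $0<q\le1$, and via Minkowski's inequality when $q\ge1$. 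Because $\frac{d\sigma}{\sigma}$ is invariant under $\sigma\mapsto2^{j}\sigma$, each resulting term obeys $\int_0^\infty g(2^{-j}\sigma)^q\frac{d\sigma}{\sigma}=\int_0^\infty g(t)^q\frac{dt}{t}$, and the geometric weights $2^{-j\theta}$ sum precisely because $\theta>0$. This yields
\[
\int_0^\infty\psi(\sigma)^q\,\frac{d\sigma}{\sigma}\le C(\theta,q,r)\int_0^\infty g(t)^q\,\frac{dt}{t}=C(\theta,q,r)\int_0^\infty\bigl(t^{-\theta}\phi(t)\bigr)^q\,\frac{dt}{t},
\]
with $C(\theta,q,r)=\bigl(\tfrac{2^{r}-1}{r}\bigr)^q\sum_{j\ge1}2^{-j\theta q}$ if $0<q\le1$ and $C(\theta,q,r)=\bigl(\tfrac{2^{r}-1}{r}\sum_{j\ge1}2^{-j\theta}\bigr)^q$ if $q\ge1$; in both cases a finite constant depending only on $\theta,q,r$. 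This is the claimed inequality.

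The only substantive step is the first one, and it is the sole place where monotonicity of $\varphi$ is used; everything afterwards (the $\ell^q$ triangle inequality, Minkowski's inequality, the scale invariance of $\frac{dt}{t}$) is routine. Thus the ``main obstacle'' is really the recognition that one cannot invoke Hardy's inequality as a black box in the range $0<q<1$: instead one must rewrite the averaging operator as a superposition of dilations with summable off-diagonal decay $2^{-j\theta}$, after which the bound $\psi(\sigma)\lesssim\sum_{j\ge1}2^{-j\theta}g(2^{-j}\sigma)$ makes the conclusion immediate.
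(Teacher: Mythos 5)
Your proof is correct. It rests on the same core idea as the paper's --- a dyadic decomposition of $\int_0^\sigma \phi(t)\,\frac{dt}{t}$ in which the monotonicity of $\varphi$ bounds each ring by $\frac{2^r-1}{r}\phi(2^{-j}\sigma)$, so that the weight $\sigma^{-\theta}$ converts the sum into one with geometric decay $2^{-j\theta}$ --- but the execution differs. The paper discretizes both sides of the inequality into sequences $a_k=\phi(2^{-k})$ and $b_k=\int_0^{2^{-k+1}}\phi(t)\frac{dt}{t}$ sampled at fixed dyadic points, proves $b_k\le 2^r\sum_{j\ge k-1}a_j$, and then invokes a discrete Hardy-type lemma (Lemma 3.4, Chapter 2 of DeVore--Lorentz) together with a comparison of the continuous integrals with the discrete $\|\cdot\|_{\theta,q}$ norms. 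You instead stay at the continuous level: the pointwise bound $\psi(\sigma)\le C\sum_{j\ge1}2^{-j\theta}g(2^{-j}\sigma)$ plus the dilation invariance of $\frac{d\sigma}{\sigma}$ and either $(\sum a_j)^q\le\sum a_j^q$ (for $q\le1$) or Minkowski (for $q\ge1$) finishes the argument in one stroke. Your version is self-contained, avoids the back-and-forth between sums and integrals, and makes explicit why the case $0<q<1$ (where the classical Hardy inequality fails) is rescued by the structural hypothesis on $\phi$; the paper's version buys brevity by outsourcing the summation step to a cited lemma. One cosmetic remark applying equally to both arguments: the constant you obtain depends on $r$ as well as on $\theta$ and $q$, whereas the lemma is stated with $C(\theta,q)$; since $r$ is fixed by the hypothesis on $\phi$ this is harmless, but worth noting.
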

It was showed in $\S$3 Chapter 2\cite{devore1993constructive} that the inequality (\ref{Hardy}) holds for $1\leq q\leq \infty$ and any non-negative measurable function $\phi,$ or even for $0<q\leq \infty,$ provide the function $\phi$ is monotone. We can prove Lemma \ref{lem:Hardy} in the same way.
\begin{proof}
Let $a_k:=\phi(2^{-k})=2^{-kr} \varphi(2^{-k})$ and $b_k:=\int_0^{2^{-k+1}} t^r \varphi(t)\frac{dt}{t}$ for $k\in \Z.$ We have
\begin{equation}
b_k=\sum_{j=k-1}^\infty \int_{2^{-j-1}}^{2^{-j}} t^r \varphi(t)\frac{dt}{t} \leq 2^r \sum_{j=k-1}^\infty a_j.
\end{equation}
We can apply Lemma 3.4 in Chapter 2\cite{devore1993constructive} and obtain
\[\|(b_k)\|_{\theta,q}\leq C \|(a_k)\|_{\theta,q},   \]
where
$$\|(a_k)\|_{\theta,q}=\begin{cases} \left(\displaystyle\sum_{k\in \Z} \left(2^{k\theta} a_k \right)^q\right)^{1/q}, & 0< q <\infty,
\\ \sup_{k\in \Z} 2^{k\theta} a_k, & q=\infty. \end{cases}$$
The left side of (\ref{Hardy}) is
\[ \sum_{k\in \Z} \int_{2^{-k-1}}^{2^{-k}} \left(\sigma^{-\theta} \int_0^\sigma t\varphi(t)\frac{dt}{t} \right)^q \frac{d\sigma}{\sigma} \leq
\sum_{k\in \Z}   \left(2^{(k+1)\theta} \int_0^{2^{-k}} t\varphi(t)\frac{dt}{t} \right)^q= \|(b_k)\|_{\theta,q}^q.  \]
Similarly, the right integral in (\ref{Hardy}) is larger than $2^{-(r+1)}  \|(a_k)\|_{\theta,q}^q,$ and therefore we prove (\ref{Hardy}).
\end{proof}

\begin{proof}[Proof of Theorem \ref{thm:interp}]
Let $X:=(L_p,L_{p_1,\infty})_{\alpha/r,q}.$  For a function $f\in X,$
\begin{equation}\label{Int-norm}
\|f\|_X:= \begin{cases} \left\{\displaystyle\int_0^\infty \left[t^{-\alpha/r} K(f,t;L_p,L_{p_1,\infty})\right]^q \frac{dt}{t}\right\}^{1/q}, & 0< q<\infty,
\\  \sup_{t>0} t^{-\alpha/r} K(f,t;L_p,L_{p_1,\infty}),   & q=\infty.  \end{cases}
\end{equation}
For $f\in X$, equation (\ref{ba-Kf}) yields $\|f\|_{ A^\alpha_{p,q}}\leq C \|f\|_X$ by a change of variables.

On the other hand, for $f\in A^\alpha_{p,q},$ we have
\begin{equation}
\|f\|_X =  \left\{\int_0^\infty \left[\sigma^\alpha K(f,t;L_p,L_{p_1,\infty})\right]^q \frac{d\sigma}{\sigma}\right\}^{1/q}
\end{equation}
under a substitution of variable $t=\sigma^{-r}.$
We apply (\ref{inverse}) to the above equation and obtain
\begin{equation}
\|f\|_X \leq  C \left\{\int_0^\infty \left[\sigma^{-(r-\alpha)} \int_0^\sigma t^r E_t(f)_p \frac{dt}{t} \right]^q \frac{d\sigma}{\sigma}\right\}^{1/q}.
\end{equation}
By Lemma \ref{lem:Hardy}, we obtain
\[ \|f\|_X \leq C \|f\|_{A^\alpha_{p,q}}.  \]

\end{proof}

--------------------------------------------------------

\end{document}